\documentclass[11pt]{article}
\usepackage[margin=1.05in]{geometry}
\usepackage{amsmath,amssymb,amsthm,mathtools}
\usepackage{microtype}
\usepackage{tikz}
\usetikzlibrary{arrows.meta}
\usepackage[hidelinks]{hyperref}
\newtheorem{theorem}{Theorem}[section]
\newtheorem{proposition}[theorem]{Proposition}
\newtheorem{lemma}[theorem]{Lemma}
\newtheorem{corollary}[theorem]{Corollary}
\theoremstyle{remark}

\newcommand{\R}{\mathbb R}
\newcommand{\Z}{\mathbb Z}
\newcommand{\E}{\mathbb E}
\newcommand{\Prob}{\mathbb P}
\newcommand{\capD}{\operatorname{cap}_{\Lambda}(D)}
\newcommand{\dist}{\operatorname{dist}}
\newcommand{\M}{\mathcal M}
\title{Macroscopic freezing of a vector Gaussian free field\\ conditioned to avoid a ball}
\author{Yan Ru Pei}
\date{September 14, 2026}
\hypersetup{pdftitle={Macroscopic freezing of a vector Gaussian free field conditioned to avoid a ball},pdfauthor={Yan Ru Pei}}
\begin{document}
\maketitle
\begin{abstract}
We consider a fixed finite number of independent two-dimensional discrete
Gaussian free fields on a box of side $n$, with zero boundary values,
conditioned so that the vector-valued field stays outside a fixed ball
on a macroscopic interior domain.
We prove that the field, divided by $\log n$, converges in distribution in
$L^2$ to twice the equilibrium potential of the domain multiplied by a
uniform random unit vector. We also identify the exact capacity cost of the
conditioning event and obtain alignment on a deterministic density-one set
of sites, with no restriction on their mutual distance.
The proof combines the norm repulsion theorem of
Korzhenkova and Sep\'ulveda with Gaussian large deviations and the equality
case in the capacity variational problem.
\end{abstract}

\section{Introduction and result}

Entropic repulsion describes the displacement of a random surface forced
to avoid an obstacle. For a vector-valued field, avoiding a ball leaves the
direction of that displacement free. The resulting question is whether
different parts of a macroscopic domain choose the same direction.
Korzhenkova and Sep\'ulveda~\cite{KS} proved that the norm of a
two-dimensional vector Gaussian free field is typically $2\log n$ under
this conditioning and that its directions align at distances up to $n^\nu$
for every fixed $\nu<1$. Their Remark~6 asks for macroscopic freezing.
We establish a full spatial profile, which in particular gives this
alignment.

Let $\Lambda=(-1/2,1/2)^2$, and let $D$ be a nonempty open set with smooth
boundary, $0\in D$, and $\overline D\subset\Lambda$.
Write $\Lambda_n=[-n/2,n/2]^2\cap\Z^2$ and $D_n=nD\cap\Z^2$.
Fix an integer $m\ge2$ and $R>0$. Let
$\varphi_n=(\varphi_n^1,\ldots,\varphi_n^m)$ have independent
zero-boundary Gaussian free field coordinates on $\Lambda_n$.
Our normalization is the density
\begin{equation}\label{eq:density}
 \frac1{Z_n}\exp\left\{-\frac1{4\pi}
       \sum_{\{x,y\}:\,x\sim y,\ \{x,y\}\cap\Lambda_n\ne\varnothing}
       |\varphi_n(x)-\varphi_n(y)|^2\right\},
\end{equation}
where each edge is counted once and the field is zero off $\Lambda_n$.
Thus each coordinate has covariance $2\pi L_n^{-1}$, where $L_n$
is the Dirichlet graph Laplacian, and its variance at the origin is
asymptotic to $\log n$.
Set
\[
 \Omega_n=\{|\varphi_n(x)|\ge R\text{ for all }x\in D_n\},
 \qquad Q_n=\Prob(\,\cdot\mid\Omega_n).
\]
We embed $\varphi_n/\log n$ as a step function $X_n$ on $\Lambda$:
on the cell $(x+[-1/2,1/2)^2)/n$, intersected with $\Lambda$,
its value is $\varphi_n(x)/\log n$.

Define the relative capacity and its equilibrium potential by
\begin{equation}\label{eq:capacity}
 \capD=\frac12\inf\left\{\int_\Lambda|\nabla h|^2:
       h\in H_0^1(\Lambda),\ h\ge1\text{ a.e. on }D\right\},
 \qquad h_D=\mathop{\rm argmin}(\cdots).
\end{equation}
The minimizer is unique, equals one on $D$, and is strictly positive
throughout $\Lambda$. In particular, it couples the directions chosen in
different components even if $D$ is disconnected.

\begin{theorem}\label{thm:main}
With the preceding assumptions and normalization,
\begin{equation}\label{eq:cost}
 \lim_{n\to\infty}\frac{\log\Prob(\Omega_n)}{(\log n)^2}
       =-\frac2\pi\capD.
\end{equation}
Under $Q_n$, the random functions $X_n$ converge in distribution in
$L^2(\Lambda;\R^m)$ to $2Uh_D$, where $U$ is uniform on $S^{m-1}$.
In particular, if
$\M=\{2eh_D:e\in S^{m-1}\}$, then for every $\delta>0$,
\begin{equation}\label{eq:shape}
 Q_n\bigl(\dist_{L^2}(X_n,\M)\ge\delta\bigr)\longrightarrow0.
\end{equation}
\end{theorem}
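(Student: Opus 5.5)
The proof has three parts: the lower bound in \eqref{eq:cost}, a matching upper bound that comes with a large deviation principle for $X_n$, and the identification of the conditional limit from the equality case of \eqref{eq:capacity}.

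\textbf{Lower bound.} I would use a Cameron--Martin tilt. Fix $e\in S^{m-1}$ and $\varepsilon>0$. Let $h$ be a smooth compactly supported approximation of $h_D$ in $H^1_0(\Lambda)$ with $h\ge1$ on a neighbourhood of $\overline D$. Shift the field by the discretisation of $(2+\varepsilon)e\,h(\cdot/n)\log n$.
\begin{itemize}
\item With the normalization \eqref{eq:density}, the Dirichlet energy of this shift is $\frac1{4\pi}(2+\varepsilon)^2(\log n)^2\int|\nabla h|^2(1+o(1))$. Its Radon--Nikodym cost is therefore $\exp\{-\frac{(2+\varepsilon)^2}{2\pi}\cdot\frac12\int|\nabla h|^2\,(\log n)^2(1+o(1))\}$, using the relation between the quadratic form and the cost of shifts.
\item Under the shifted law, $\Omega_n$ still has to be checked, because the unshifted field has maximum $\approx 2\log n$ on $D_n$. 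I would not compute this directly. Instead I would invoke the Korzhenkova--Sep\'ulveda norm repulsion theorem: the conditional probability of $\Omega_n$ given this shift is $\ge e^{-o((\log n)^2)}$.
\item A more elementary route also works. The field stays in the half-space $\{\langle\varphi,e\rangle\ge R\}$ unless the $e$-coordinate GFF drops below $-\varepsilon\log n$ somewhere in $D_n$. That coordinate is a scalar GFF, and hard-wall entropic repulsion bounds (Bolthausen--Deuschel--Giacomin type) give cost $O((\log n)^2)$ with constant tending to $0$ as $\varepsilon\to0$.
\end{itemize}
Letting $\varepsilon\to0$ and $h\to h_D$ gives $\liminf \ge -\frac2\pi\capD$.

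\textbf{Upper bound and LDP.} The upper bound rests on the LDP for $X_n$ in $L^2$ (weak topology first) at speed $(\log n)^2$, with rate $I(f)=\frac1{4\pi}\int|\nabla f|^2$ for $f\in H^1_0(\Lambda;\R^m)$. This is Gaussian large deviations, since $X_n$ is a centered Gaussian with covariance $2\pi L_n^{-1}/(\log n)^2$ and the Laplacian scales out. Exponential tightness in strong $L^2$ follows from the smoothing of the discrete covariance beyond the microscopic scale. The input I need from Korzhenkova--Sep\'ulveda is that on $\Omega_n$, with conditional probability tending to $1$ up to $e^{-o((\log n)^2)}$ errors, the local averages of $|\varphi_n|$ over mesoscopic boxes in $D_n$ are at least $(2-\varepsilon)\log n$. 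Their theorem that the norm is typically $2\log n$ gives exactly this. So up to subexponential error, $\Omega_n\subset\{X_n\in F_\varepsilon\}$, where $F_\varepsilon$ is the closed set of $f$ with $|f|$ averaged over balls in $D$ being $\ge 2-\varepsilon$. A convexity/lower-semicontinuity argument gives $\inf_{F_\varepsilon} I\ge \frac{(2-\varepsilon)^2}{4\pi}\cdot 2\capD$, because $|f|/(2-\varepsilon)$ is admissible and $|\nabla|f||\le|\nabla f|$. This yields \eqref{eq:cost}.

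\textbf{Identification of the limit.} The upper bound shows that $Q_n(X_n\in A)\to0$ exponentially for every closed $A$ avoiding the set of minimizers of $I$ over $F_0$. A minimizer satisfies $|f|\ge2$ on $D$ and $\frac1{4\pi}\int|\nabla f|^2=\frac2\pi\capD$. Then equality must hold in both $|\nabla|f||\le|\nabla f|$ and the capacity problem, so $|f|=2h_D$. Equality in the first gives $\nabla(f/|f|)=0$ wherever $f\ne0$. Since $h_D>0$ on the connected set $\Lambda$, $f/|f|$ is constant, and hence $f=2eh_D$. This proves \eqref{eq:shape}. Rotation invariance of $Q_n$ then forces the limiting direction $U$ to be uniform, which gives convergence in distribution to $2Uh_D$.

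The main obstacle is the upper-bound step: converting the pointwise conditioning into the macroscopic constraint $|X_n|\gtrsim2$ on $D$ at exponential precision $e^{-o((\log n)^2)}$. This is where I rely most heavily on the precise quantitative form of the Korzhenkova--Sep\'ulveda repulsion estimate.
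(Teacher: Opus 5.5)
Your architecture (Gaussian LDP at speed $(\log n)^2$, a macroscopic constraint extracted from norm repulsion, $|\nabla|f||\le|\nabla f|$, the equality case, rotation invariance) matches the paper's. However, the upper bound has a genuine gap. No closed set $F_\varepsilon$ of the kind you describe does both jobs you assign to it: $Q_n(X_n\in F_\varepsilon)$ not exponentially small, and $\inf_{F_\varepsilon}I\ge\frac{(2-\varepsilon)^2}{2\pi}\capD$.

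Mesoscopic boxes depend on $n$, so they do not define a fixed set to which the LDP upper bound applies. If ``balls in $D$'' means all balls, then $F_\varepsilon=\{f:|f|\ge2-\varepsilon\text{ a.e.\ on }D\}$. For the step function $X_n$ this demands $|\varphi_n(x)|\ge(2-\varepsilon)\log n$ at \emph{every} site of $D_n$. That is a far stronger hard wall than $\Omega_n$, essentially avoiding a ball of radius $(2-\varepsilon)\log n$, and the one-site statement \eqref{eq:norm-repulsion} cannot deliver it.

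If instead the balls have a fixed radius $r>0$, averaging \eqref{eq:norm-repulsion} does give $Q_n(X_n\in F_{\varepsilon,r})\to1$. But then $|f|/(2-\varepsilon)$ is not admissible in \eqref{eq:capacity}, and your bound on $\inf I$ is false in general. For example, if $D$ is a disc of radius $r$, then for $f=eg$ the constraint is a single integral condition on $g$. Its energy minimizer solves $-\Delta g=\lambda\mathbf 1_D$, so it is not constant on $D$ and beats $(2-\varepsilon)eh_D$.

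The missing idea is a relaxation step. As the relaxation parameter vanishes, the infimum of $I$ under the relaxed constraint converges to the infimum under $|f|\ge2$ a.e.\ on $D$. This follows from compactness of the sublevel sets of $I$ in strong $L^2$, continuity of the constraint, and lower semicontinuity of $I$; it is Lemma~\ref{lem:gap}. The paper makes it painless with the bounded, $L^2$-continuous deficit $G(f)=\int_D(2-|f|)_+^2$. Lemma~\ref{lem:deficit} gives $\E_{Q_n}G(X_n)\to0$ directly from \eqref{eq:norm-repulsion}, and then $\Prob(\Omega_n)\le\Prob(G(X_n)\le\eta)/Q_n(G(X_n)\le\eta)$. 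So the $e^{-o((\log n)^2)}$ precision you flag as the main obstacle is not needed at all.

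The same issue recurs in your identification step. You control $A\cap F_\varepsilon$, not $A\cap F_0$, so you need the uniform gap in the second half of Lemma~\ref{lem:gap}. The resulting bound on $Q_n(X_n\in A)$ is only $o(1)$, not exponential.

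Your Cameron--Martin lower bound is a genuinely different, self-contained route; the paper simply cites \eqref{eq:lower-cost} from Korzhenkova--Sep\'ulveda. But neither justification you give for $\Omega_n$ under the shifted law is right as stated. The appeal to norm repulsion is misplaced, since those results concern the conditioned law, not the shifted one. The elementary route has the wrong threshold. With $h\ge1$ near $\overline D$, the $e$-coordinate of the shifted field is at least $\varphi_n^e+(2+\varepsilon)\log n$ on $D_n$. So $\Omega_n$ can fail only if the scalar field $\varphi_n^e=\langle\varphi_n,e\rangle$ drops below $R-(2+\varepsilon)\log n$ somewhere in $D_n$, not below $-\varepsilon\log n$.

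Since its variance is at most $\log n+O(1)$, a Gaussian union bound over $O(n^2)$ sites bounds this probability by $n^{2-(2+\varepsilon)^2/2+o(1)}\to0$. Hence the shifted probability of $\Omega_n$ tends to one. The relative-entropy inequality, with entropy equal to the shift energy, then closes the argument without any repulsion input. As you wrote it, the claim fails: keeping a scalar field above $-\varepsilon\log n$ on $D_n$ costs order $(\log n)^2$, with a constant tending to the full hard-wall constant $\frac2\pi\capD$ as $\varepsilon\to0$, not to zero.
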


\begin{corollary}\label{cor:spins}
There are deterministic sets $\widetilde D_n\subset D_n$ with
$|D_n\setminus\widetilde D_n|=o(n^2)$ such that, writing
$\sigma_n(x)=\varphi_n(x)/|\varphi_n(x)|$ on $\Omega_n$,
\[
 \inf_{x,y\in\widetilde D_n}
       \E[\sigma_n(x)\cdot\sigma_n(y)\mid\Omega_n]\longrightarrow1.
\]
\end{corollary}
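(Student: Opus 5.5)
The statement to prove is Corollary~\ref{cor:spins}, and I would derive it from Theorem~\ref{thm:main}. The plan has three steps: turn $L^2$ closeness to $\M$ into pointwise closeness on most sites, choose the deterministic set $\widetilde D_n$ by averaging, and then pass from closeness of field values to closeness of their directions.

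\textbf{Step 1: from $L^2$ to most sites.} On $D$ we have $h_D=1$, so every element of $\M$ restricted to $nD$ is the constant $2e$ for some $e\in S^{m-1}$. Fix $\delta>0$. By \eqref{eq:shape}, with $Q_n$-probability $1-o(1)$ there is $e$ with
\[
\frac1{n^2}\sum_{x\in D_n}\bigl|\varphi_n(x)/\log n-2e\bigr|^2\le C\delta^2,
\]
up to boundary cells of $nD$, of which there are $O(n)$ and which are negligible since $\partial D$ is smooth. Let $B(x)$ be the event $|\varphi_n(x)/\log n-2e|>\sqrt\delta$ for this random $e$. By Chebyshev, on the good event at most $C\delta n^2$ sites of $D_n$ are bad. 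Taking expectations,
\[
\sum_{x\in D_n}Q_n(B(x))\le C\delta n^2+o(n^2).
\]
Here $e$ is random, but it is chosen measurably, for instance as a minimizer, so each $B(x)$ is a well-defined event.

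\textbf{Step 2: choose $\widetilde D_n$ deterministically.} Let $\delta_n\to0$ slowly enough that the statements above still hold uniformly along $\delta=\delta_n$; this is possible because convergence in \eqref{eq:shape} holds for each fixed $\delta$. Define
\[
\widetilde D_n=\{x\in D_n: Q_n(B(x))\le\sqrt{\delta_n}\}.
\]
Markov's inequality then gives $|D_n\setminus\widetilde D_n|\le C\sqrt{\delta_n}\,n^2+o(n^2)=o(n^2)$.

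\textbf{Step 3: from values to directions.} Take $x,y\in\widetilde D_n$. Off the event $B(x)\cup B(y)\cup\{\text{bad}\}$, which has probability $o(1)$ uniformly in $x,y$, both $\varphi_n(x)/\log n$ and $\varphi_n(y)/\log n$ lie within $\sqrt{\delta_n}$ of $2e$. Since these vectors are bounded away from the origin, both $\sigma_n(x)$ and $\sigma_n(y)$ lie within $C\sqrt{\delta_n}$ of $e$, and so $\sigma_n(x)\cdot\sigma_n(y)\ge1-C\delta_n$. On the exceptional event we use only $\sigma_n(x)\cdot\sigma_n(y)\ge-1$. This yields
\[
\inf_{x,y\in\widetilde D_n}\E[\sigma_n(x)\cdot\sigma_n(y)\mid\Omega_n]\ge1-o(1).
\]
The only delicate point is the diagonal choice of $\delta_n$ together with handling the boundary cells of $nD$ in Step 1; both are routine.
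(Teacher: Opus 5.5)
Your proof is correct, but it handles the random direction differently from the paper.

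\textbf{Your route.} You fix a common random reference direction $e$, namely a measurable nearest point of $\M$ such as $U_n=A_n/|A_n|$ from Section~\ref{sec:conditioning}. You then threshold site by site, and select $\widetilde D_n$ by bounding the per-site exceptional probabilities $Q_n(B(x))$. This needs a diagonal sequence $\delta_n$.

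\textbf{The paper's route.} The paper never selects $e$. It uses $|z/|z|-e|\le|z-2e|$ to get $\inf_e|D_n|^{-1}\sum_x|\sigma_n(x)-e|^2\to0$ in mean. Hence the averaged pairwise quantity $a_n=|D_n|^{-2}\sum_{x,y}\E_{Q_n}|\sigma_n(x)-\sigma_n(y)|^2$ tends to zero. It then picks a deterministic anchor site $x_n$ and keeps the $y$ with $\E_{Q_n}|\sigma_n(x_n)-\sigma_n(y)|^2\le\sqrt{a_n}$. The squared triangle inequality finishes the argument.

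\textbf{What each buys.} The paper avoids measurable selection and diagonalization. It also gives explicit bounds in one intrinsic quantity: $|D_n\setminus\widetilde D_n|\le\sqrt{a_n}\,|D_n|$ and infimum at least $1-2\sqrt{a_n}$. Your version gives a slightly more informative conclusion: every spin in $\widetilde D_n$ is close in probability to the macroscopic direction $U_n$ of Theorem~\ref{thm:main} itself, not merely to another spin.

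\textbf{One bookkeeping point in your Step 2.} Markov's inequality divides every term by $\sqrt{\delta_n}$. So the contributions $|D_n|\,Q_n(\dist_{L^2}(X_n,\M)\ge\delta_n)$ and the $O(n)$ boundary sites are $o(n^2)$ only if you choose $\delta_n$ with, say, $Q_n(\dist_{L^2}(X_n,\M)\ge\delta_n)\le\delta_n$ and $n\sqrt{\delta_n}\to\infty$. Both are arranged by the usual diagonal construction, but the displayed bound $C\sqrt{\delta_n}\,n^2+o(n^2)$ should reflect this.
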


The crucial point is that norm repulsion yields the limiting constraint
$|f|\ge2$ almost everywhere on $D$, with violations measured by a bounded
functional continuous in $L^2$. The Gaussian rate function then forces the conditioned
field toward the minimizers of a vector capacity problem. Taking the
modulus reduces that problem to scalar capacity; equality in the energy
inequality forces a single direction throughout the box.
This large-deviation and capacity approach has classical scalar
antecedents, including Ben Arous and Deuschel~\cite{BAD} and
Bolthausen, Deuschel and Giacomin~\cite{BDG}.
We give the spectral normalization and approximation details below.

\section{The two inputs from norm repulsion}

We use the following consequences of~\cite{KS}, in the normalization
\eqref{eq:density}. The first is their Theorem~1.1.2, and the second is
their Corollary~4.1.2.
\begin{proposition}[Korzhenkova--Sep\'ulveda]\label{prop:KS}
For every $\eta,\varepsilon>0$,
\begin{equation}\label{eq:norm-repulsion}
 \#\left\{x\in D_n:
 Q_n\left(\left|\frac{|\varphi_n(x)|}{\log n}-2\right|
                    \ge\eta\right)>\varepsilon\right\}=o(n^2).
\end{equation}
Moreover,
\begin{equation}\label{eq:lower-cost}
 \liminf_{n\to\infty}\frac{\log\Prob(\Omega_n)}{(\log n)^2}
                 \ge-\frac2\pi\capD .
\end{equation}
\end{proposition}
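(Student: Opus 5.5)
The proposition collects two results of~\cite{KS}, so the work is mainly translating between normalizations, plus a self-contained check of \eqref{eq:lower-cost} so that the constant $\frac2\pi$ is unambiguous.

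\textbf{Normalization.} The first step is to match \eqref{eq:density} with the convention of~\cite{KS}.
\begin{itemize}
\item The quadratic form in \eqref{eq:density} is $\frac1{4\pi}\langle\varphi,L_n\varphi\rangle$ in each coordinate, which gives covariance $2\pi L_n^{-1}$.
\item The Green function of $L_n$ at the origin is $\frac1{2\pi}\log n+O(1)$, so the variance is $\log n+O(1)$.
\item If~\cite{KS} instead uses variance $g\log n$ (for instance $g=2/\pi$), one rescales $\varphi$ by $\sqrt g$. The radius $R$ is fixed, so rescaling only changes $R$, which their results allow to be arbitrary.
\item The constants $2$ in \eqref{eq:norm-repulsion} and $\frac2\pi$ in \eqref{eq:lower-cost} are then what their statements become after this rescaling.
\end{itemize}

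\textbf{Norm repulsion.} Their Theorem~1.1.2 is stated for bulk points, i.e.\ points of $D_n$ at distance at least $\epsilon n$ from $\partial D_n$, possibly uniformly or in an averaged sense. I would convert it to the counting form \eqref{eq:norm-repulsion} as follows.
\begin{itemize}
\item Fix $\eta,\varepsilon$. The points within $\epsilon n$ of $n\partial D$ number $O(\epsilon n^2)$, because $\partial D$ is smooth.
\item If their statement is averaged, Markov's inequality bounds the number of remaining points with $Q_n(\cdot)>\varepsilon$ by $o(n^2)/\varepsilon$.
\item Letting $\epsilon\downarrow0$ gives $o(n^2)$.
\end{itemize}

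\textbf{Lower bound on $\Prob(\Omega_n)$.} This is their Corollary~4.1.2, but I would also verify it by a change of measure.
\begin{itemize}
\item Fix $e\in S^{m-1}$ and $\epsilon>0$. Take a smooth compactly supported $g\in H_0^1(\Lambda)$ with $g\ge1$ on a neighbourhood of $\overline D$ and $\frac12\int|\nabla g|^2\le(1+\epsilon)\capD$.
\item Let $\psi_n(x)=a_n g(x/n)e$ with $a_n=2(1+\epsilon)\log n$, and let $\widetilde\Prob$ be the law of $\varphi_n+\psi_n$.
\item The relative entropy is
\[
 H(\widetilde\Prob\mid\Prob)=\frac1{4\pi}\sum_{x\sim y}|\psi_n(x)-\psi_n(y)|^2 .
\]
By the Riemann-sum approximation for smooth $g$, this is $(1+o(1))\frac{a_n^2}{4\pi}\int|\nabla g|^2\le(1+o(1))(1+\epsilon)^3\frac2\pi\capD(\log n)^2$.
\item The standard entropy inequality gives
\[
 \log\Prob(A)\ge\log\widetilde\Prob(A)-\frac{H(\widetilde\Prob\mid\Prob)+e^{-1}}{\widetilde\Prob(A)} .
\]
So it suffices to show $\widetilde\Prob(\Omega_n)\to1$.
\item Under $\widetilde\Prob$, $|\varphi_n+\psi_n|\ge(\varphi_n\cdot e)+a_n$ on $D_n$.
\item $\varphi_n\cdot e$ is a scalar field with the same normalization. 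Its minimum over $D_n$ is at least $-(2+\epsilon)\log n$ with probability tending to one, by the extremal theory of the DGFF (maximum $\sim2\sqrt{g}\log n$ with $g=1$ here).
\item Hence $\widetilde\Prob(\Omega_n)\to1$ once $a_n-(2+\epsilon)\log n\ge R$, which holds for large $n$. Letting $\epsilon\downarrow0$ gives \eqref{eq:lower-cost}.
\end{itemize}

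\textbf{Main obstacle.} I expect the delicate point to be the exact matching of constants. The Green function asymptotic $\frac1{2\pi}\log n$, the maximum constant, and the factor $\frac12$ in the definition \eqref{eq:capacity} of $\capD$ must all combine to give $\frac2\pi$. I would re-derive each of these in the normalization \eqref{eq:density} rather than trust the conversion from~\cite{KS}.
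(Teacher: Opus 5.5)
Your proposal is correct. For \eqref{eq:norm-repulsion} it agrees with the paper, which gives no argument beyond importing Theorem~1.1.2 of Korzhenkova--Sep\'ulveda. Your boundary-layer and Markov reductions to the counting form are harmless whatever the exact form of their statement.

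For \eqref{eq:lower-cost} you take a genuinely different route. The paper simply cites their Corollary~4.1.2. You instead prove it with the standard tilting argument of the scalar entropic-repulsion literature: shift by $\psi_n=a_n g(\cdot/n)e$ and apply the entropy inequality. The constants work out in this paper's normalization:
\begin{itemize}
\item The entropy $\frac1{4\pi}\sum|\psi_n(x)-\psi_n(y)|^2\sim\frac{a_n^2}{4\pi}\int|\nabla g|^2$ gives exactly $(1+\epsilon)^3\frac2\pi\capD(\log n)^2$.
\item $\widetilde\Prob(\Omega_n)\to1$ only needs $\min_{D_n}\varphi_n\cdot e\ge-(2+\epsilon)\log n$ with probability tending to one. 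A first-moment union bound over $O(n^2)$ sites with variances at most $\log n+O(1)$ gives this, so no extremal theory of the DGFF is needed.
\item The existence of a smooth compactly supported $g\ge1$ on a neighbourhood of $\overline D$ with nearly optimal energy uses the smoothness of $\partial D$. For example, take a mollified truncation of a slightly enlarged multiple of $h_D$.
\end{itemize}

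What your route buys is that the lower bound, and with it the constant $\frac2\pi$, becomes self-contained and free of any conversion between conventions. Your normalization remark is imprecise on one point. Only the constant $2$ actually changes under rescaling the field. The leading cost is scale-invariant, since it does not depend on $R$, so it depends only on how capacity is normalized; your direct computation settles that anyway. What your route does not buy is anything for \eqref{eq:norm-repulsion}. That is the genuinely hard input behind Lemma~\ref{lem:deficit}, and hence behind the upper bound, and it must still be imported exactly as in the paper.
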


Only a bounded lower deficit is needed to use this input; no conditional
moment estimate is required. Define
\[
 G(f)=\int_D(2-|f(u)|)_+^2\,du,\qquad f\in L^2(\Lambda;\R^m).
\]
This is a bounded continuous functional. Indeed, the function
$z\mapsto(2-|z|)_+^2$ is bounded by $4$ and is $4$-Lipschitz.

\begin{lemma}\label{lem:deficit}
$\E_{Q_n}G(X_n)\to0$.
\end{lemma}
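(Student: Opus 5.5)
Since $X_n$ is a step function, $G(X_n)$ is a finite sum over lattice sites whose cells meet $D$. Linearity of expectation and the bound $(2-|z|)_+^2\le 4$ then reduce the claim to counting sites. We write
\[
 \E_{Q_n}G(X_n)=\sum_{x}\bigl|(x+[-\tfrac12,\tfrac12)^2)/n\cap D\bigr|\;\E_{Q_n}\Bigl(2-\tfrac{|\varphi_n(x)|}{\log n}\Bigr)_+^2 ,
\]
where the sum runs over $x\in\Z^2$ whose scaled cell meets $D$. Each cell has area at most $n^{-2}$. The sites with cells meeting $D$ but $x\notin D_n$ lie within distance $O(1)$ of $\partial(nD)$. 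Because $\partial D$ is smooth, and hence has finite length, there are $O(n)$ such sites. Their contribution to the sum is therefore at most $4\cdot O(n)\cdot n^{-2}\to0$.

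For $x\in D_n$, fix $\eta,\varepsilon>0$ and call $x$ \emph{good} if $Q_n(||\varphi_n(x)|/\log n-2|\ge\eta)\le\varepsilon$, and \emph{bad} otherwise. For a good site we split the expectation on the event $\{||\varphi_n(x)|/\log n-2|<\eta\}$. On that event the integrand is at most $\eta^2$, and on its complement it is at most $4$. This gives
\[
 \E_{Q_n}\Bigl(2-\tfrac{|\varphi_n(x)|}{\log n}\Bigr)_+^2\le\eta^2+4\varepsilon .
\]
For a bad site we use only the crude bound $4$. By \eqref{eq:norm-repulsion} in Proposition~\ref{prop:KS}, the number of bad sites is $o(n^2)$. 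Since $|D_n|=O(n^2)$, summing with weights $n^{-2}$ yields
\[
 \limsup_{n\to\infty}\E_{Q_n}G(X_n)\le |D|(\eta^2+4\varepsilon)+\limsup_{n\to\infty}4n^{-2}\cdot o(n^2)=|D|(\eta^2+4\varepsilon).
\]
Letting $\eta,\varepsilon\downarrow0$ gives the lemma.

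Only the one-sided deficit $(2-|z|)_+$ enters, and it is bounded. This is why no moment control of $|\varphi_n|$ under $Q_n$ is needed, in contrast to what an upper-deviation bound would require.

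The main (mild) obstacle is the bookkeeping of the discretization. We must check that the cells intersected with $\Lambda$ and $D$ align correctly with $D_n=nD\cap\Z^2$. In particular, we must confirm that cells meeting $D$ whose centre lies outside $nD$ are $O(n)$ in number. This follows from the smoothness of $\partial D$, and it is where the regularity hypothesis on $D$ is used.
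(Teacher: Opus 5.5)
Your proposal is correct and follows the paper's own proof. You discard the $O(n)$ boundary cells, which the paper phrases as an $O(n^{-1})$ area discrepancy between $D$ and the union of cells centered in $D_n/n$; you bound the expected squared deficit by $\eta^2+4\varepsilon$ at non-exceptional sites and by $4$ at the $o(n^2)$ exceptional sites of \eqref{eq:norm-repulsion}; and you then let $\eta,\varepsilon\downarrow0$, with your site-by-site bookkeeping simply making explicit what the paper leaves brief.
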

\begin{proof}
The union of the cells with centers in $D_n/n$ differs from $D$
by area $O(n^{-1})$. Thus its use in $G$ changes the answer by
$O(n^{-1})$. For fixed $\eta,\varepsilon>0$, discard the $o(n^2)$
exceptional sites in \eqref{eq:norm-repulsion}. At every remaining site,
the expected squared lower deficit is at most $\eta^2+4\varepsilon$.
It follows that
\[
 \limsup_n\E_{Q_n}G(X_n)\le |D|(\eta^2+4\varepsilon).
\]
Let $\eta,\varepsilon$ decrease to zero.
\end{proof}

\section{The vector capacity problem}

Consider the good rate function
\begin{equation}\label{eq:rate}
 I(f)=
 \begin{cases}
  \displaystyle\frac1{4\pi}\int_\Lambda|\nabla f|^2,
       &f\in H_0^1(\Lambda;\R^m),\\
  +\infty,&\text{otherwise}.
 \end{cases}
\end{equation}
Compact Sobolev embedding makes the sublevel sets of $I$ compact in $L^2$.
Set $C_D=(2/\pi)\capD$.

\begin{lemma}\label{lem:minimizers}
The minimum of $I$ on $\{G=0\}$ is $C_D$, and its set of minimizers
is exactly $\M$.
\end{lemma}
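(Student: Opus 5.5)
The plan is to reduce the vector problem to the scalar capacity problem \eqref{eq:capacity} by taking the modulus, and then to analyse the equality case.

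\emph{Step 1: upper bound on the minimum.} For $e\in S^{m-1}$, the function $f=2eh_D$ lies in $H_0^1(\Lambda;\R^m)$ and satisfies $|f|=2$ on $D$, so $G(f)=0$. Its energy is
\[
 I(f)=\frac{4}{4\pi}\int_\Lambda|\nabla h_D|^2=\frac{2}{\pi}\capD=C_D .
\]

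\emph{Step 2: lower bound.} Let $f\in H_0^1(\Lambda;\R^m)$ with $G(f)=0$, so $|f|\ge2$ a.e.\ on $D$. The function $g=|f|/2$ lies in $H_0^1(\Lambda)$, and the chain rule for Lipschitz maps gives $|\nabla|f||\le|\nabla f|$ a.e. This is the diamagnetic-type inequality: where $f\ne0$, $\nabla|f|=(f/|f|)\cdot\nabla f$ and Cauchy--Schwarz applies; where $f=0$, $\nabla|f|=0$ a.e. Since $g\ge1$ a.e.\ on $D$, $g$ is admissible in \eqref{eq:capacity}. Hence
\[
 I(f)\ge\frac1{4\pi}\int_\Lambda|\nabla|f||^2=\frac{1}{\pi}\int_\Lambda|\nabla g|^2\ge\frac2\pi\capD=C_D .
\]
Together with Step 1, the minimum equals $C_D$.

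\emph{Step 3: equality case.} Suppose $I(f)=C_D$. Both inequalities in Step 2 are then equalities.
\begin{itemize}
 \item Equality in the capacity problem, together with uniqueness of the minimizer (stated after \eqref{eq:capacity}), gives $|f|=2h_D$.
 \item Since $h_D>0$ throughout $\Lambda$, $f$ never vanishes.
 \item Equality $\int|\nabla|f||^2=\int|\nabla f|^2$, combined with the pointwise inequality, forces $|\nabla|f||=|\nabla f|$ a.e.
\end{itemize}
Write $f=2h_D\,\omega$ with $\omega=f/|f|$, taking values in $S^{m-1}$. Because $|\omega|=1$, we have $\omega\cdot\partial_i\omega=0$, so
\[
 |\nabla f|^2=4|\nabla h_D|^2+4h_D^2|\nabla\omega|^2,
 \qquad |\nabla|f||^2=4|\nabla h_D|^2 .
\]
Therefore $h_D^2|\nabla\omega|^2=0$ a.e., and since $h_D>0$, $\nabla\omega=0$ a.e.

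\emph{Step 4: the direction is constant.} To make Step 3 rigorous, note that $h_D$ is harmonic, hence smooth and bounded below by a positive constant on each compact subset of $\Lambda$. There, $\omega=f/(2h_D)$ is in $H^1_{\rm loc}$ and the computation above is justified. Since $\Lambda$ is connected and $\nabla\omega=0$, $\omega$ is a.e.\ equal to a constant $e\in S^{m-1}$. Hence $f=2eh_D\in\M$. This gives the minimizer set exactly, combined with Step 1 for the reverse inclusion.

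The main obstacle is this justification of $\omega\in H^1_{\rm loc}$ and of the product rule. The key is the strict positivity of $h_D$, which is what rules out different directions in different components of $D$.
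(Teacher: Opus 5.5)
Your argument has the same structure as the paper's proof. You reduce to the scalar problem through $|\nabla|f||\le|\nabla f|$, and uniqueness of the scalar minimizer gives $|f|=2h_D$. You split $|\nabla f|^2=4|\nabla h_D|^2+4h_D^2|\nabla\omega|^2$ using $\omega\cdot\partial_j\omega=0$, and conclude by connectedness of $\Lambda$. Steps 1--3, including the constants, are correct.

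The flaw is in Step 4, in the justification you yourself single out as the main obstacle. $h_D$ is \emph{not} harmonic on $\Lambda$, and it is not smooth. A harmonic function in $H_0^1(\Lambda)$ vanishes identically. In fact $h_D$ is harmonic only on $\Lambda\setminus\overline D$ and equals $1$ on $D$. Its distributional Laplacian $-\Delta h_D$ is the equilibrium measure on $\partial D$, of mass $\int_\Lambda|\nabla h_D|^2=2\capD>0$, so its normal derivative jumps across $\partial D$. Hence "an $H^1$ function times a smooth function" is not available, and your deduction of $\omega\in H^1_{\rm loc}$ rests on a false premise.

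What you actually need, and what the paper uses, is weaker. $h_D$ is continuous on $\Lambda$: it is harmonic off $\overline D$, equal to $1$ on $\overline D$, and $\partial D$ is smooth. It is also strictly positive, so $h_D\ge c_K>0$ on each compact $K\subset\Lambda$. Then $1/h_D\in H^1\cap L^\infty$ near $K$ by the chain rule, and $f$ is bounded there because $|f|=2h_D$. The product rule for bounded $H^1$ functions therefore gives $\omega=f/(2h_D)\in H^1_{\rm loc}$. Note that $|f\,\nabla(1/h_D)|=2|\nabla h_D|/h_D\le2|\nabla h_D|/c_K\in L^2$. Alternatively, smoothness of $\partial D$ makes $h_D$ locally Lipschitz in $\Lambda$, and multiplying by a $W^{1,\infty}_{\rm loc}$ function also suffices. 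With this replacement, your proof is complete.
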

\begin{proof}
For finite-rate $f$, its modulus $r=|f|$ belongs to $H_0^1(\Lambda)$,
with $|\nabla r|\le|\nabla f|$ almost everywhere.
When $G(f)=0$, the function $r/2$ is admissible in
\eqref{eq:capacity}. Hence $I(f)\ge C_D$.
Each $2eh_D$ attains this bound.

If equality holds, uniqueness in the scalar problem gives
$|f|=2h_D$. The equilibrium potential is continuous and bounded away
from zero on every compact subset of $\Lambda$.
Consequently $s=f/(2h_D)$ belongs to $H^1_{\rm loc}(\Lambda;\R^m)$
and $|s|=1$. The Sobolev product rule and
$s\cdot\partial_j s=0$ give
\[
 |\nabla f|^2=4|\nabla h_D|^2+4h_D^2|\nabla s|^2.
\]
Equality of the energies implies $\nabla s=0$ almost everywhere.
Since $\Lambda$ is connected, $s$ is one constant unit vector.
\end{proof}

Figure~\ref{fig:orientation} illustrates why the equality case forces
the same direction even on disconnected components of $D$.

\begin{figure}[tbp]
\centering
\begin{tikzpicture}[
  x=1.35cm,y=1.35cm,
  every node/.style={font=\small},
  fieldarrow/.style={-{Stealth[length=1.7mm,width=1.2mm]},line width=.7pt},
  obstacle/.style={draw=black!45,fill=black!7,line width=.5pt}
]
\begin{scope}
  \node at (1.75,3.03) {(a) Constant direction};
  \draw[line width=.7pt] (0,0) rectangle (3.5,2.6);
  \node[anchor=south west] at (0,2.6) {$\Lambda$};
  \path[obstacle] (1.05,1.10) ellipse (.80 and .55);
  \path[obstacle] (2.62,1.70) ellipse (.42 and .45);
  \node at (1.03,.87) {$D_1$};
  \node at (2.63,1.46) {$D_2$};
  \foreach \x/\y/\len in {
    .35/.40/.30,.48/2.15/.28,.18/1.72/.22,
    .70/1.20/.65,2.30/1.83/.65,1.95/.58/.40,
    1.75/2.18/.30,2.94/.68/.28}
    \draw[fieldarrow,blue!65!black] (\x,\y) -- ++(18:\len);
  \node at (1.75,-.34) {$s(u)=U$};
  \node at (1.75,-.70) {Zero angular energy};
\end{scope}
\begin{scope}[xshift=6.4cm]
  \node at (1.75,3.03) {(b) Varying direction};
  \draw[line width=.7pt] (0,0) rectangle (3.5,2.6);
  \node[anchor=south west] at (0,2.6) {$\Lambda$};
  \path[obstacle] (1.05,1.10) ellipse (.80 and .55);
  \path[obstacle] (2.62,1.70) ellipse (.42 and .45);
  \node at (1.03,.87) {$D_1$};
  \node at (2.63,1.46) {$D_2$};
  \foreach \x/\y/\len/\angle in {
    .35/.40/.30/10,.48/2.15/.28/16,.18/1.72/.22/4,
    .70/1.20/.65/24,2.30/1.83/.65/115,1.95/.58/.40/78,
    1.75/2.18/.30/66,2.94/.68/.28/108}
    \draw[fieldarrow,orange!65!black] (\x,\y) -- ++(\angle:\len);
  \node at (1.75,-.34) {$s(u)\ \text{nonconstant}$};
  \node at (1.75,-.70) {Positive angular energy};
\end{scope}
\end{tikzpicture}
\caption{The equality case in the vector capacity problem.
Both panels have $f=2h_Ds$ with $|s|=1$, and hence the same modulus.
The shaded set $D=D_1\cup D_2$ has $h_D=1$; the potential is positive
in $\Lambda$ and vanishes on its boundary.
Arrows schematically depict internal vectors, not spatial flow.
Varying $s$ adds $4\int_\Lambda h_D^2|\nabla s|^2$ to the Dirichlet
energy. Connectedness of $\Lambda$ forces $s$ to be constant whenever
this term vanishes.}
\label{fig:orientation}
\end{figure}

\begin{lemma}\label{lem:gap}
As $\eta\downarrow0$,
$\inf_{\{G\le\eta\}}I\to C_D$. Furthermore, for every $\delta>0$
there are $\eta,c>0$ such that
\[
 \inf\{I(f):G(f)\le\eta,\ \dist_{L^2}(f,\M)\ge\delta\}
                    \ge C_D+c.
\]
\end{lemma}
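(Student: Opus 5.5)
The plan is a compactness argument that rests on three facts: $I$ is a good rate function (its sublevel sets are compact in $L^2$), $G$ is continuous on $L^2$, and Lemma~\ref{lem:minimizers} identifies the minimum of $I$ on $\{G=0\}$ and its minimizers. Both claims are proved by contradiction.

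\emph{First claim.} Because $\{G\le\eta\}$ shrinks as $\eta$ decreases, the quantity $\inf_{\{G\le\eta\}}I$ is nondecreasing as $\eta\downarrow0$. It is also bounded above by $C_D$, since $\{G=0\}\subset\{G\le\eta\}$. Let $L$ be its limit, so $L\le C_D$, and suppose $L<C_D$. Take $\eta_k\downarrow0$ and $f_k$ with $G(f_k)\le\eta_k$ and $I(f_k)\le L+1/k$. All $f_k$ lie in the sublevel set $\{I\le L+1\}$, which is compact in $L^2$. Pass to a subsequence with $f_k\to f$ in $L^2$. Continuity of $G$ gives $G(f)=0$. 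Lower semicontinuity of $I$ gives $I(f)\le L<C_D$. This contradicts Lemma~\ref{lem:minimizers}, so $L=C_D$.

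Lower semicontinuity of $I$ in $L^2$ is standard. If $f_k\to f$ in $L^2$ with $\sup_k I(f_k)<\infty$, then $(f_k)$ is bounded in $H_0^1$. It therefore has a subsequence converging weakly in $H_0^1$, and the weak limit must be $f$. The Dirichlet energy is weakly lower semicontinuous, which gives the claim.

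\emph{Second claim.} Fix $\delta>0$ and suppose no admissible pair $\eta,c$ exists. Then, taking $\eta=c=1/k$, we get $f_k$ with $G(f_k)\le1/k$, $\dist_{L^2}(f_k,\M)\ge\delta$, and $I(f_k)<C_D+1/k$. By the same compactness there is a subsequence with $f_k\to f$ in $L^2$. The limit satisfies $G(f)=0$ and $I(f)\le C_D$. The set $\M$ is compact, being the continuous image of $S^{m-1}$, so $\dist_{L^2}(\cdot,\M)$ is continuous and $\dist_{L^2}(f,\M)\ge\delta$. On the other hand, Lemma~\ref{lem:minimizers} shows $f$ attains the minimum $C_D$ on $\{G=0\}$, so $f\in\M$. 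This is a contradiction.

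The only step needing care is the lower semicontinuity and goodness of $I$, which the paper already asserts through compact Sobolev embedding. Everything else is routine.
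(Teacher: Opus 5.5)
Your proof is correct and is the same argument the paper gives: compactness of the sublevel sets of $I$, continuity of $G$, lower semicontinuity of $I$, and a contradiction with Lemma~\ref{lem:minimizers}, where you spell out the monotonicity of $\eta\mapsto\inf_{\{G\le\eta\}}I$ and the lower semicontinuity more explicitly than the paper does. One minor remark is that $\dist_{L^2}(\cdot,\M)$ is $1$-Lipschitz for any set, so you do not need compactness of $\M$ for that step.
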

\begin{proof}
For the first claim, a counterexample would give a bounded-rate sequence
$f_j$ with $G(f_j)\to0$ and $I(f_j)\le C_D-\varepsilon$ for some
fixed $\varepsilon>0$.
Compactness of rate sublevels, continuity of $G$, and lower
semicontinuity of $I$ would contradict Lemma~\ref{lem:minimizers}.
For the second claim, failure gives a bounded-rate sequence with
$G(f_j)\to0$, distance at least $\delta$ from $\M$, and
$I(f_j)\le C_D+o(1)$. A convergent subsequence has a limit in
$\{G=0\}$ at minimum rate, still at distance at least $\delta$
from $\M$, again a contradiction.
\end{proof}

\section{Conditioning and macroscopic alignment}\label{sec:conditioning}

We use the $L^2$ large-deviation principle proved in
Proposition~\ref{prop:LDP}: the unconditioned $X_n$ have speed
$(\log n)^2$ and rate $I$.
By Lemma~\ref{lem:deficit}, for each fixed $\eta>0$,
\[
 \Prob(\Omega_n)(1-o(1))
       \le \Prob\bigl(G(X_n)\le\eta\bigr).
\]
The closed-set large-deviation upper bound and
Lemma~\ref{lem:gap}, followed by $\eta\downarrow0$, give the upper
bound in \eqref{eq:cost}. Proposition~\ref{prop:KS} gives the lower bound.

For \eqref{eq:shape}, fix $\delta>0$ and take $\eta,c$ from
Lemma~\ref{lem:gap}. Then
\begin{align*}
 Q_n\bigl(\dist_{L^2}(X_n,\M)\ge\delta\bigr)
 &\le Q_n(G(X_n)>\eta)\\
 &\quad+
 \frac{\Prob(G(X_n)\le\eta,\ \dist_{L^2}(X_n,\M)\ge\delta)}
      {\Prob(\Omega_n)}.
\end{align*}
The first term tends to zero. The upper bound for closed sets and
\eqref{eq:lower-cost} make the second at most
$\exp\{-(c+o(1))(\log n)^2\}$.
To identify the direction explicitly, set
$A_n=\int_\Lambda X_n(u)h_D(u)\,du$ and $U_n=A_n/|A_n|$.
The vector $A_n$ is nonzero almost surely under $Q_n$ by absolute
continuity of the finite-dimensional Gaussian law.
Rotation invariance makes $U_n$ exactly uniform on $S^{m-1}$.
Expanding the squared $L^2$ norm shows that $2U_nh_D$ is a nearest
point to $X_n$ in $\M$. Thus \eqref{eq:shape} gives
$\|X_n-2U_nh_D\|_{L^2}\to0$ in probability, while $2U_nh_D$
already has the claimed limiting law.
This proves Theorem~\ref{thm:main}.

\begin{proof}[Proof of Corollary~\ref{cor:spins}]
For $z\ne0$ and $e\in S^{m-1}$,
\begin{equation}\label{eq:normalize}
 \left|\frac z{|z|}-e\right|\le |z-2e|.
\end{equation}
Indeed, insert $z/2$ and use $\big||z|-2\big|\le|z-2e|$.
Since $h_D=1$ on $D$, \eqref{eq:shape}, \eqref{eq:normalize},
and the bounded boundary-cell error show that
\[
 \inf_{e\in S^{m-1}}\frac1{|D_n|}
            \sum_{x\in D_n}|\sigma_n(x)-e|^2
       \longrightarrow0
\]
in $Q_n$-probability and in mean. Boundedness justifies the latter
convergence.
Let
\[
 a_n=\frac1{|D_n|^2}\sum_{x,y\in D_n}
                \E_{Q_n}|\sigma_n(x)-\sigma_n(y)|^2.
\]
The preceding convergence implies $a_n\to0$; equivalently,
if $M_n=|D_n|^{-1}\sum_x\sigma_n(x)$, then
$a_n=2\E_{Q_n}(1-|M_n|^2)\to0$.
Choose a deterministic $x_n$ for which the average over $y$ of
$\E_{Q_n}|\sigma_n(x_n)-\sigma_n(y)|^2$ is at most $a_n$.
When $a_n>0$, let
\[
 \widetilde D_n=
 \{y\in D_n:\E_{Q_n}|\sigma_n(x_n)-\sigma_n(y)|^2\le\sqrt{a_n}\}.
\]
Its complement has relative size at most $\sqrt{a_n}$.
For $y,z\in\widetilde D_n$ the squared triangle inequality gives
$\E_{Q_n}|\sigma_n(y)-\sigma_n(z)|^2\le4\sqrt{a_n}$.
If $a_n=0$, take $\widetilde D_n=D_n$.
\end{proof}

\section{Gaussian large deviations with the required normalization}

The scalar droplet analysis of~\cite{BAD} uses an $L^2$
large-deviation principle. The following direct verification
records the normalization and the step-function topology used here.
The vector dimension is fixed throughout.

\begin{proposition}\label{prop:LDP}
The unconditioned laws of $X_n$ satisfy a good large-deviation principle
on $L^2(\Lambda;\R^m)$ with speed $(\log n)^2$ and rate \eqref{eq:rate}.
\end{proposition}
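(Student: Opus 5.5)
Since the coordinates are independent and both $I$ and the $L^2$ norm split over coordinates, I will prove the scalar case $m=1$ and take the product (a product of good LDPs on $L^2(\Lambda)$ with the same speed is a good LDP on $L^2(\Lambda;\R^m)$ with summed rate). The plan is the Gärtner--Ellis route in a Gaussian setting. $X_n$ is a centered Gaussian element of $L^2(\Lambda)$ with covariance operator $C_n=(\log n)^{-2}\,2\pi\,P_nL_n^{-1}P_n^{*}$, where $P_n$ maps lattice functions to step functions. For $g\in L^2(\Lambda)$ we have
\[
\frac{1}{(\log n)^2}\log\E\exp\{(\log n)^2\langle g,X_n\rangle\}=\tfrac12(\log n)^2\langle g,C_ng\rangle=\pi\,\langle \bar g_n,L_n^{-1}\bar g_n\rangle ,
\]
where $\bar g_n(x)=\int_{\text{cell}(x)}g$. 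The first step is to show that this converges to $\Lambda(g)=\pi\langle g,(-\Delta_\Lambda)^{-1}g\rangle$. This is standard convergence of discrete Dirichlet Green's functions to the continuum one: $n^{2}\bar g_n$ samples $g$ at scale $1/n$, and $L_n^{-1}$ is $n^{-2}$ times a discrete approximation of $(-\Delta)^{-1}$ up to the scaling. I will verify it first for smooth compactly supported $g$, using consistency of the five-point Laplacian together with the discrete maximum principle. I then extend to all of $L^2$ by the uniform bound $\langle \bar g_n,L_n^{-1}\bar g_n\rangle\le C\|g\|_{L^2}^2$. That bound follows from the spectral gap: the smallest eigenvalue of $L_n$ is of order $n^{-2}$, and $\|\bar g_n\|_{\ell^2}^2\le n^{-2}\|g\|^2$. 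The Legendre transform of $\Lambda$ is $\sup_g\{\langle g,f\rangle-\pi\langle g,(-\Delta)^{-1}g\rangle\}=\frac1{4\pi}\int|\nabla f|^2$ for $f\in H^1_0$ and $+\infty$ otherwise, which matches \eqref{eq:rate}. The normalization check is that the variance $\sim\log n$ at the origin is consistent with the factor $2\pi$.

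Gärtner--Ellis in infinite dimensions needs exponential tightness. I plan to get it in a Hilbert space directly. Fix the eigenbasis $\{e_k\}$ of $-\Delta_\Lambda$ with eigenvalues $\lambda_k\sim k$. For $\kappa$ small, I will bound
\[
\E\exp\Bigl\{\kappa(\log n)^2\sum_k\lambda_k^{s}\langle X_n,e_k\rangle^2\Bigr\}\le e^{C(\log n)^2}
\]
for some small $s>0$. This works because the Gaussian quadratic form has trace $\sum_k\lambda_k^{s}\langle e_k,C_ne_k\rangle\lesssim(\log n)^{-2}\sum_k\lambda_k^{s-1}\wedge(\text{cutoff }k\lesssim n^2)$, which is at most $(\log n)^{-2}\cdot O(\log n)$. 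Its operator norm is $O((\log n)^{-2})$. The log-Laplace transform of a Gaussian quadratic form is then controlled by the trace once $\kappa$ times the operator norm is small. The sets $\{\sum\lambda_k^s\langle f,e_k\rangle^2\le M\}$ are compact in $L^2$, so this gives exponential tightness. Step functions are not in $H^1$, so a fractional Sobolev norm $H^{s}$ with $s<1/2$ is exactly what is needed; this is why I choose $s$ small.

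With exponential tightness, the limit $\Lambda$, and the fact that $\Lambda$ is finite, quadratic, and Gateaux differentiable on all of $L^2$, the Baldi / Gärtner--Ellis theorem for separable Banach spaces yields the full LDP with rate $\Lambda^*=I$. The lower bound comes from exposed points. Since $\Lambda$ is a continuous quadratic form, every $f$ of finite rate in the range of $(-\Delta)^{-1}$ is exposed. These points are dense in $H^1_0$ in energy, so the lower bound extends to all finite-rate $f$. Alternatively I can avoid exposedness: a Cameron--Martin shift by a discretization of a smooth $f$ gives the lower bound directly, with the cost computed through the discrete Dirichlet form converging to $\frac1{4\pi}\int|\nabla f|^2$. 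Goodness of $I$ is already noted in the paper.

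I expect the main obstacle to be the uniform Green's function estimate: convergence of $\pi\langle\bar g_n,L_n^{-1}\bar g_n\rangle$ for rough $g$, together with trace control near the diagonal, where the $\log n$ divergence lives. I would handle it through the spectral comparison that discrete Dirichlet eigenvalues on $\Lambda_n$ are at least a constant times the continuum ones scaled by $n^{-2}$, for $k\lesssim n^2$.
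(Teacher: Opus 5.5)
\paragraph{Verdict: the exponential-tightness step fails.}
Your route differs from the paper's. You identify the limiting log-Laplace functional $\pi\langle g,(-\Delta_\Lambda)^{-1}g\rangle$, prove exponential tightness, and invoke Baldi's theorem. The first part is sound: the uniform bound $\langle\bar g_n,L_n^{-1}\bar g_n\rangle\le C\|g\|^2$ from the spectral gap, the convergence for smooth $g$, and the Legendre transform $\frac1{4\pi}\int|\nabla f|^2$ are all correct. The gap is the tightness estimate, which is false for every $s>0$.

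Write $X_n=Y_n/\log n$, with $Y_n$ the embedded unscaled field. Your exponent $\kappa(\log n)^2\sum_k\lambda_k^s\langle X_n,e_k\rangle^2$ equals $\kappa\sum_k\lambda_k^s\langle Y_n,e_k\rangle^2$. By Jensen, the logarithm of your exponential moment is at least $\kappa\sum_k\lambda_k^s\,\E\langle Y_n,e_k\rangle^2$. For a continuum mode $e_p$ with $|p|\lesssim n$, one has $\E\langle Y_n,e_p\rangle^2\ge c|p|^{-2}$; this follows from its overlap with the corresponding discrete sine mode, which is a product of two sinc factors bounded below. So the logarithm is at least $c\kappa\sum_{|p|\lesssim n}|p|^{2s-2}\asymp\kappa n^{2s}/s$, which is far larger than $C(\log n)^2$.

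Your trace computation breaks at exactly this point. One has $\sum_{k\lesssim n^2}\lambda_k^{s-1}\asymp\sum_{k\lesssim n^2}k^{s-1}\asymp n^{2s}/s$, not $O(\log n)$. The logarithmic bound holds only at $s=0$, where the sets give no compactness. Worse, since the operator norm is $O(1)$ on the $Y_n$ scale, the quadratic form concentrates around its mean, so $\|X_n\|_{H^s}\to\infty$ in probability. Your $H^s$ balls therefore eventually carry almost none of the mass, let alone all but an exponentially small fraction of it.

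The obstruction is not that step functions fail to lie in $H^1$. It is the roughness of the free field itself: any polynomial frequency weight picks up the lattice-scale fluctuations, and dividing by $\log n$ cannot absorb them.

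\paragraph{Repair with logarithmic weights.}
Take weights that grow only logarithmically, $w_k=\log(1+\lambda_k)$.
\begin{itemize}
\item The sets $\{f:\sum_k w_k\langle f,e_k\rangle^2\le M\}$ are still compact in $L^2$, because $w_k\to\infty$.
\item The operator-norm condition still follows from your step-function $H^s$ comparison, since $\log(1+\lambda)\le C_s\lambda^s$.
\item The trace is now of order $\sum_{|p|\lesssim n}\log|p|/|p|^2\asymp(\log n)^2$, matching the speed exactly. Chebyshev then gives $\Prob\bigl(\sum_k w_k\langle X_n,e_k\rangle^2>M\bigr)\le e^{-\kappa(M-C)(\log n)^2}$.
\end{itemize}

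\paragraph{Alternative: the paper's mechanism.}
In the paper, the modes $|p|>K$ have covariance of operator norm $O(K^{-2})$ and trace $O(\log n)$. Their plain $L^2$ norm therefore exceeds $r\log n$ with probability $e^{-(cr^2K^2+o(1))(\log n)^2}$. The gain comes from the small operator norm, not from a stronger norm. This one estimate gives exponential tightness and also an exponentially good finite-dimensional approximation. That is why the paper can bypass both Baldi's theorem and the convergence of $\Lambda_n(g)$ for rough $g$, at the cost of working with the explicit discrete sine basis.
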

\begin{proof}
Translate the continuum box to $(0,1)^2$. Let $\ell\times\ell$ be the
number of lattice sites: $\ell=n$ for odd $n$, and $\ell=n+1$ for even $n$.
Put $h=1/(\ell+1)$ and $g=1/(2\pi)$.
On cells of side $h$ centered at $(ih,jh)$, embed the discrete sine
vectors as step functions $e_{\ell,p}$ with values
$2\sin(\pi p_1ih)\sin(\pi p_2jh)$, and set them to zero in the
remaining boundary strips. These functions are exactly orthonormal.
For each fixed $p\in\mathbb N^2$ they converge in $L^2$ to
$e_p(u)=2\sin(\pi p_1u_1)\sin(\pi p_2u_2)$.

The unscaled embedded field has the expansion
\[
 Y_\ell=\sum_{1\le p_1,p_2\le\ell}
          \sqrt{q_{\ell,p}}\,\xi_p e_{\ell,p},\qquad
 q_{\ell,p}=
 \frac1{4g(\ell+1)^2
       \sum_{j=1}^2\sin^2(\pi p_j/(2(\ell+1)))},
\]
where the $\xi_p$ are independent standard Gaussian vectors in $\R^m$.
Consequently,
\begin{equation}\label{eq:modes}
 q_{\ell,p}\longrightarrow q_p=\frac2{\pi|p|^2},\qquad
 q_{\ell,p}\le\frac{\pi}{2|p|^2},\qquad
 \sum_pq_{\ell,p}\le C\log(\ell+1).
\end{equation}
The elementary inequality $\sin t\ge2t/\pi$ on $[0,\pi/2]$
gives the middle bound; summation in dyadic annuli gives the last.

To obtain the literal cells defining $X_n$, apply the linear map $T_n$
that retains site values on the corresponding cells of side $1/n$,
cut off at $\partial\Lambda$. Thus, after translation,
$X_n=T_nY_\ell/\log n$.
Their areas are at most $n^{-2}$, so
$\|T_n\|^2\le(\ell+1)^2/n^2=1+O(n^{-1})$.
For fixed $p$, $T_ne_{\ell,p}\to e_p$ in $L^2$.
Let $Y_{n,>K}$ be the sum of the transformed modes
$\sqrt{q_{\ell,p}}\,\xi_pT_ne_{\ell,p}$ with $|p|>K$.
Equation~\eqref{eq:modes} therefore yields
\[
 \|\operatorname{Cov}(Y_{n,>K})\|\le CK^{-2},\qquad
 \operatorname{Tr}\operatorname{Cov}(Y_{n,>K})
                       \le C_m\log(n+1).
\]
For a centered finite-rank Hilbert-space Gaussian $Y$ with covariance
$Q$, diagonalization gives, when $0\le2t\|Q\|\le1/2$,
\[
 \log\E e^{t\|Y\|^2}
    =-\frac12\operatorname{Tr}\log(1-2tQ)
    \le2t\operatorname{Tr}Q.
\]
Taking $t$ a sufficiently small multiple of $K^2$ shows, for every
$r>0$,
\[
 \limsup_n\frac1{(\log n)^2}
 \log\Prob(\|Y_{n,>K}\|>r\log n)\le-c r^2K^2.
\]
For fixed $K$, the difference between the retained Gaussian sums,
obtained by replacing $\sqrt{q_{\ell,p}}\,T_ne_{\ell,p}$ with
$\sqrt{q_p}\,e_p$, has covariance operator tending to zero.
The same Gaussian estimate, with arbitrary fixed $t$ and then
$t\to\infty$, makes this replacement exponentially negligible.
Thus
\[
 Z_{n,K}=\frac1{\log n}\sum_{|p|\le K}\sqrt{q_p}\,\xi_pe_p
\]
is an exponentially good approximation to $X_n$ as $K\to\infty$.

Write $V_K=\operatorname{span}\{e_p:|p|\le K\}\otimes\R^m$.
Let $P_K$ be the $L^2$-orthogonal projection onto $V_K$, and write
$f_p=\int_{(0,1)^2}f(u)e_p(u)\,du\in\R^m$.
The finite-dimensional Gaussian principle gives rate
$I_K(f)=\frac12\sum_p|f_p|^2/q_p$ on $V_K$ and infinity off $V_K$.
The Fourier characterization of the Dirichlet energy gives
$I_K(P_Kf)=I(P_Kf)\uparrow I(f)$, with $I$ as in \eqref{eq:rate}.
For completeness, the approximation also gives the full bounds directly.
For an open set containing a finite-rate $f$, choose a ball around $f$,
then $K$ so large that its projection $P_Kf$ lies in that ball and
the approximation error has exponential rate strictly larger than
$I(f)$. Subtracting that error from the finite-dimensional lower
bound gives the open-set lower bound.
For closed $F$ and $\delta>0$,
\[
 \limsup_n\frac{\log\Prob(X_n\in F)}{(\log n)^2}
 \le\max\left\{-\inf_{F^\delta\cap V_K}I,\,-c\delta^2K^2\right\},
\]
where $F^\delta=\{f:\dist(f,F)\le\delta\}$.
Take $\delta=K^{-1/2}$ and then $K\to\infty$.
Compactness of rate sublevels gives
$\liminf_K\inf_{F^{K^{-1/2}}\cap V_K}I\ge\inf_F I$.
This proves the closed-set upper bound and completes the proof.
\end{proof}

\section{Further questions}

The theorem determines the leading conditional profile. Several finer
questions remain beyond its scope. First, can one obtain quantitative
rates of profile concentration and remove the exceptional set in
Corollary~\ref{cor:spins} on a fixed interior subdomain?
For a nonempty open $D'$ with $\overline{D'}\subset D$, this would mean
\[
 \inf_{x,y\in nD'\cap\Z^2}
 \E_{Q_n}[\sigma_n(x)\cdot\sigma_n(y)]\longrightarrow1.
\]
This asks for uniformity of the expected pairwise alignment; simultaneous
uniform alignment in a field realization is a different question.

Second, the residual $X_n-2U_nh_D$, with $U_n$ chosen by the projection
in Section~\ref{sec:conditioning}, suggests a fluctuation problem. What additional centering,
normalization and topology yield a nontrivial limit, and how do the
components parallel and perpendicular to $U_n$ differ?
The present $L^2$ convergence does not identify these finer scales.

Finally, the leading avoidance cost in \eqref{eq:cost} is independent
of the fixed radius $R$ and vector dimension $m$. Determining the next
asymptotic order could reveal their effect on the cost.
For a translated ball or an anisotropic obstacle, one can also ask
which orientations are selected and at what order the breaking of
rotational symmetry becomes visible.

\end{document}